\title{Witt vectors and separably closed fields with higher derivations}
\author[D.M. HOFFMANN]{Daniel Max Hoffmann$^{\dagger}$}
\thanks{2010 \textit{Mathematics Subject Classification}. Primary 03C60; Secondary 13N15, 20G15}
\thanks{\textit{Key words and phrases}. Hasse-Schmidt derivations, separably closed fields, algebraic groups.}
\thanks{$^{\dagger}$SDG.
The author is supported by the National Science Centre (Narodowe Centrum Nauki, Poland) 
grants no. 2016/20/T/ST1/00482, 
2016/21/N/ST1/01465, and 2015/19/B/ST1/01150.}
\address{Daniel Max Hoffmann, Instytut Matematyki\\
Uniwersytet Warszawski\\
Warszawa\\
Poland}
\email{daniel.max.hoffmann@gmail.com}
\urladdr{{https://sites.google.com/site/danielmaxhoffmann/}}
  \DeclareMathOperator{\id}{id}
 \DeclareMathOperator{\fr}{Fr}
\DeclareMathOperator{\ch}{char}
\DeclareMathOperator{\ev}{ev}
\DeclareMathOperator{\sep}{sep}
\DeclareMathOperator{\sch}{SCH}
\DeclareMathOperator{\ddf}{DF}\DeclareMathOperator{\dcf}{DCF}\DeclareMathOperator{\scf}{SCF}
\DeclareMathOperator{\shf}{SHF}
\newtheorem{theorem}{Theorem}[section]
\newtheorem{prop}[theorem]{Proposition}
\newtheorem{lemma}[theorem]{Lemma}
\newtheorem{cor}[theorem]{Corollary}
\newtheorem{fact}[theorem]{Fact}
\theoremstyle{definition}
\newtheorem{definition}[theorem]{Definition}
\newtheorem{example}[theorem]{Example}
\newtheorem{remark}[theorem]{Remark}
\theoremstyle{remark}
\DeclareMathOperator{\ve}{V}
\DeclareMathOperator{\re}{R}
\providecommand*{\cupdot}{%
  \mathbin{%
    \mathpalette\@cupdot{}%
  }%
}
\newcommand*{\@cupdot}[2]{%
  \ooalign{%
    $\m@th#1\cup$\cr
    \hidewidth$\m@th#1\cdot$\hidewidth
  }%
}
\begin{document}

\newcommand{\twoc}[3]{ {#1} \choose {{#2}|{#3}}}
\newcommand{\thrc}[4]{ {#1} \choose {{#2}|{#3}|{#4}}}
\newcommand{\Zz}{{\mathds{Z}}}
\newcommand{\Ff}{{\mathds{F}}}
\newcommand{\Cc}{{\mathds{C}}}
\newcommand{\Rr}{{\mathds{R}}}
\newcommand{\Nn}{{\mathds{N}}}
\newcommand{\Qq}{{\mathds{Q}}}
\newcommand{\Kk}{{\mathds{K}}}
\newcommand{\Pp}{{\mathds{P}}}
\newcommand{\ddd}{\mathrm{d}}
\newcommand{\Aa}{\mathds{A}}
\newcommand{\dlog}{\mathrm{ld}}
\newcommand{\ga}{\mathbb{G}_{\rm{a}}}
\newcommand{\gm}{\mathbb{G}_{\rm{m}}}
\newcommand{\gaf}{\widehat{\mathbb{G}}_{\rm{a}}}
\newcommand{\gmf}{\widehat{\mathbb{G}}_{\rm{m}}}
\newcommand{\gdf}{\mathfrak{g}-\ddf}
\newcommand{\gdcf}{\mathfrak{g}-\dcf}
\newcommand{\fdf}{F-\ddf}
\newcommand{\fdcf}{F-\dcf}
\newcommand{\mw}{\scf_{\text{MW},e}}

\maketitle
\begin{abstract}
The main scope of this short paper is to provide a modification 
of the axioms given by Messmer and Wood
for the theory of separably closed fields of positive characteristic and finite imperfectness degree.
The original axioms failed to meet natural expectations,
and therefore a new axiomatization was given (i.e. Ziegler's one),
but the new axioms do not follow
the initial idea of Messmer and Wood.
Therefore, 
we aim to give a correct axiomatization which is more similar to the original one and which,
as the original axioms, involves only one Hasse-Schmidt derivation, this time 
based on the iterativity conditions corresponding to the Witt group.
\end{abstract}

\section{Introduction}
Messmer and Wood proposed in \cite{MW}  an axiomatization of the theory of separably closed fields of positive characteristic $p$ and finite imperfectness degree $e$, denoted by $\scf_{p,e}$. 
Their primary aim was to describe models of $\scf_{p,e}$ 
in a language without naming a $p$-basis, but with one higher derivation (then a \emph{basis} could be reconstructed via derivations).
Authors expected
that $\scf_{p,e}$ in this new set-up will enjoy
quantifier elimination and elimination of imaginaries.
This was related to Hrushovski's celebrated proof of the relative Mordell-Lang conjecture (\cite{Hr7}).
More precisely, if models of $\scf_{p,e}$ can be described in a differential set-up having
quantifier elimination and elimination of imaginaries, 
then a uniformed version of Hrushovski's proof, working for all characteristics, might be derived.

Unfortunately, as Ziegler pointed out in \cite{Zieg3}, 
the axiomatization of Messmer and Wood fails
since there are some gaps and false claims in \cite{MW}.
Besides of that, we found out some more conceptual problems
in the axioms given in \cite{MW} - we give more details in Section \ref{original}. 
Moreover, in \cite{Zieg3}, Ziegler provided
a very natural axiomatization of $\scf_{p,e}$
which involves higher derivations.
However, Ziegler's idea
is quite different from the idea of Messmer and Wood, since Ziegler uses a collection of $e$ higher derivations instead of one single higher derivation.
In this paper, we propose a language and axioms which are
more in the spirit of \cite{MW}, for example we consider one sequence of $p^e$-nilpotent operators
instead of $e$ sequences of $p$-nilpotent operators (as in \cite{Zieg3}).
The main difficulty lies in finding a proper iterativity conditions and a group staying behind them.
In general, one could consider a theory of $F$-fields (see Definition \ref{f_ring}), 
where $F$ is an $e$-dimensional formal group law.
Both descriptions of the theory $\scf_{p,e}$, our and Ziegler's,
fit in with this general frame.
Ziegler's axioms correspond to the existentially closed $\mathbb{G}_a^e$-fields, our axioms correspond to the existentially closed
$W_e$-fields, where $W_e$ denotes the Witt group of dimension $e$.
The special property of the Witt group $W_e$ which is crucial for us, is that any $e$-dimensional Hasse-Schmidt derivation obeying the iterativity conditions coming from $W_e$ can be reconstructed from some 1-dimensional Hasse-Schmidt derivation (see Lemma \ref{shf_We}). 
Thus our strategy in this paper is to compress an $e$-dimensional Hasse-Schmidt derivation into a 1-dimensional Hasse-Schmidt derivation.
This is not the case if we choose the iterativity conditions coming from $\mathbb{G}_a^e$, unless $e=1$.

The fundamental theory, on which everything is build is the theory of existentially closed $F$-fields and we regard this paper as an application of the general theory of existentially closed $F$-fields, which was studied in \cite{HK}.

It should be stressed, that we do not aim to give a ``better" axiomatization than the one from \cite{Zieg3}, but the one which is the closest one to the original idea from \cite{MW}. So, this paper may be considered as an end note to \cite{MW}.

I thank Piotr Kowalski for the main idea of this paper and for many helpful comments on every stage of my work. I am grateful to the referee for careful reading and very useful remarks.

\section{Basic notions about Hasse-Schmidt derivations}
By $f^{(n)}:S\to S$, where $S$ is a set, we denote
the composition of $f$ with itself $n$ times.
For the rest of this paper, we
fix a prime number $p$ and a positive natural number $e$. 
Let $R$ and $S$ be $\mathbb{F}_p$-algebras.
The symbol $\bar{x}^{\mathbf{i}}$, where $\mathbf{i}=(i_1,\ldots,i_e)\in\mathbb{N}^e$
and $\bar{x}=(x_1,\ldots,x_e)$ is a tuple of elements from $R$,
stands for the element $x_1^{i_1}\cdot\ldots\cdot x_e^{i_e}$.
We use similar convention for variables.
For any $n\in\mathbb{N}$, $[n]$ denotes the set $\lbrace 0,\ldots, n-1\rbrace$. 

For the convenience of the reader, we collect below definitions of all the languages, which are introduced in this paper. The symbols $b_1,\ldots,b_e$ below are constant symbols:
\begin{IEEEeqnarray*}{rClCrCl}
 \mathcal{L}^0 &:=& \lbrace+,-,\cdot,0,1\rbrace, & &\mathcal{L}^0_B &:=& \mathcal{L}^0\cup\lbrace b_1,\ldots,b_e\rbrace, \\
 \mathcal{L} &:=& \mathcal{L}^0\cup\lbrace D_{n}\rbrace_{n\in\mathbb{N}}, &\qquad &
  \mathcal{L}_B &:=&\mathcal{L}\cup\lbrace b_1,\ldots,b_e\rbrace, \\
 \mathcal{L}^{\ast} &:=& \mathcal{L}^0\cup\lbrace D_{\mathbf{i}}\rbrace_{\mathbf{i}\in\mathbb{N}^e}, & & \mathcal{L}^{\ast}_{B} &:=& \mathcal{L}^{\ast}\cup\lbrace b_1,\ldots,b_e\rbrace.
\end{IEEEeqnarray*}
Moreover, we refer sometimes to the language of the theory from \cite{MW}:
$\mathcal{L}_{MW}=\mathcal{L}^0\cup\lbrace D_{p^i}\rbrace_{i\in\mathbb{N}}$, which is a sublanguage of $\mathcal{L}$.

\begin{remark}
If $S$ is a complete local $R$-algebra and elements $s_1,\ldots,s_n$ belong to the maximal ideal of $S$, then there exists (by Theorem 7.16 in \cite{EIS}) a unique $R$-algebra homomorphism $R\llbracket X_1,\ldots,X_n\rrbracket\to S$
sending each $X_i$ to $s_i$, which we denote by $\ev_{(s_1,\ldots,s_n)}$. 
For example $\ev_{\bar{0}}:R\llbracket X_1,\ldots,X_n\rrbracket\to R$ sends
any $F(X_1,\ldots,X_n)\in R\llbracket X_1,\ldots,X_n\rrbracket$ to the element $\ev_{\bar{0}}(F)$ denoted by $F(0,\ldots,0)$.
\end{remark}

\noindent
A $k$-algebra homomorphism
$$\mathbb{D}:R\to R\llbracket \bar{X}\rrbracket,$$
where $\bar{X}=(X_1,\ldots,X_e)$, is called a (\emph{an $e$-dimensional}) \emph{Hasse-Schmidt derivation} 
if $\ev_{\bar{0}}\circ\mathbb{D}=\id_R$. 
Equivalently, a Hasse-Schmidt derivation is
a collection $\mathbb{D}=(D_{\mathbf{i}}:R\to R)_{\mathbf{i}\in\mathbb{N}^e}$
of $k$-linear maps
such that $D_{\bar{0}}=\id_R$ and 
$$D_{\mathbf{i}}(rs)=\sum\limits_{\mathbf{j}+\mathbf{k}=\mathbf{i}}D_{\mathbf{j}}(r)D_{\mathbf{k}}(s)$$
holds for every $r,s\in R$.

Let $F(\bar{X},\bar{Y})\in (\mathbb{F}_p\llbracket\bar{X},\bar{Y}\rrbracket)^e$ be an \emph{$e$-dimensional formal group law} (for the definition see \cite[Section 9.1]{Hazew}).
\begin{definition}\label{f_der}
 We call a Hasse-Schmidt derivation $\mathbb{D}$ $\;F$-\emph{iterative}
 if the following diagram commutes
\begin{equation*}
 \xymatrixcolsep{4.5pc}\xymatrixrowsep{1.5pc}\xymatrix{ 
  R \ar[r]^{\mathbb{D}_{\bar{X}}} \ar[d]_{\mathbb{D}_{\bar{X}}} & R\llbracket\bar{X}\rrbracket \ar[d]^{\mathbb{D}_{\bar{Y}}\llbracket\bar{X}\rrbracket}
  \\ R\llbracket\bar{X}\rrbracket  \ar[r]_{\ev_F}   & R\llbracket\bar{X},\bar{Y}\rrbracket},
\end{equation*}
where $\mathbb{D}_{\bar{X}}:=\mathbb{D}$, 
$\mathbb{D}_{\bar{Y}}:=\ev_{\bar{Y}}\circ\mathbb{D}$
and $\mathbb{D}_{\bar{Y}}\llbracket\bar{X}\rrbracket$ is given by
$$\mathbb{D}_{\bar{Y}}\llbracket\bar{X}\rrbracket
\Big(\sum\limits_{\mathbf{i}\in\mathbb{N}^e}r_{\mathbf{i}}\bar{X}^{\mathbf{i}}\Big)= 
\sum\limits_{\mathbf{i}\in\mathbb{N}^e} 
\mathbb{D}_{\bar{Y}}\big(r_{\mathbf{i}}\big)\bar{X}^{\mathbf{i}}.$$
We write briefly ``$F$-derivation" instead of ``$F$-iterative Hasse-Schmidt derivation''.
\end{definition}

\begin{definition}\label{f_ring}
 We call a pair $(R,\mathbb{D})$  an $F$-\emph{ring} if $\mathbb{D}$ is an $F$-derivation on $R$. 
 A $k$-algebra homomorphism $f:R\to S$ between $F$-rings
 $(R,\mathbb{D})$ and $(S,\mathbb{D}')$ is an $F$-\emph{morphism} (a morphism of $F$-rings)
 if $f\circ D_{\mathbf{i}}=D'_{\mathbf{i}}\circ f$ for all $\mathbf{i}\in\mathbb{N}^e$. 
 In similar manner, one may define $F$-\emph{fields}.
\end{definition}

\noindent
Commutativity of the diagram from Definition \ref{f_der} can be equivalently expressed by
so called \emph{iterativity conditions}:
$$ D_{\mathbf{i}}\circ D_{\mathbf{j}}=\sum\limits_{\mathbf{l}\in\mathbb{N}^e}\alpha_{\mathbf{i},\mathbf{j}}(\mathbf{l})\cdot D_{\mathbf{l}},$$
where $\mathbf{i},\mathbf{j}\in\mathbb{N}^e$ and
the constants $\alpha_{\mathbf{i},\mathbf{j}}(\mathbf{l})\in \mathbb{F}_p$ are given by $F$. 
For example, the standard iterativity, which
corresponds to the formal group law of $\mathbb{G}_a^e$, is given by
$$D_{\mathbf{i}}\circ D_{\mathbf{j}}={i_1+j_1\choose i_1}\cdot\ldots\cdot{i_e+j_e\choose i_e} D_{\mathbf{i}+\mathbf{j}},$$
where $\mathbf{i}=(i_1,\ldots,i_e),\mathbf{j}=(j_1,\ldots,j_e)\in\mathbb{N}^e$. The reader may consult \cite{HK} for more details
on the iterative higher dimensional Hasse-Schmidt derivations. 

For any $N\geqslant 1$, the ``multiplication by $N$ map'' is defined inductively as follows:
$$[1]_F:=\bar{X},$$
$$[N+1]:=F(\bar{X},[N]_F).$$
\begin{fact}[Corollary 2.25 in \cite{Hoff1}]\label{2.25}
 Assume that $(R,\mathbb{D})$ is a $F$-ring, 
 then for any $r\in R$ we have that
 $$\sum\limits_{\mathbf{i}\in\mathbb{N}^e}D_{\mathbf{i}}^{(p)}(r)\bar{X}^{\mathbf{i}}=
 \ev_{[p]_F(\bar{X}^{1/p})}\big(\sum\limits_{\mathbf{i}\in\mathbb{N}^e}D_{\mathbf{i}}(r)\bar{X}^{\mathbf{i}}\big) .$$
\end{fact}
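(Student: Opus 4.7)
The plan is to iterate the iterativity diagram $p$ times using distinct formal tuples of variables $\bar{X}_1,\ldots,\bar{X}_p$ and then specialize them all to $\bar{X}^{1/p}$. By induction on $n\ge 1$, I would establish
\[
\sum_{\mathbf{i}_1,\ldots,\mathbf{i}_n\in\mathbb{N}^e}(D_{\mathbf{i}_n}\circ\cdots\circ D_{\mathbf{i}_1})(r)\,\bar{X}_1^{\mathbf{i}_1}\cdots\bar{X}_n^{\mathbf{i}_n}=\sum_{\mathbf{k}\in\mathbb{N}^e}D_{\mathbf{k}}(r)\,S_n(\bar{X}_1,\ldots,\bar{X}_n)^{\mathbf{k}},
\]
where $S_n:=\bar{X}_1+_F\cdots+_F\bar{X}_n$ is the $n$-fold $F$-sum. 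The base case $n=1$ is immediate; the inductive step applies $\mathbb{D}_{\bar{X}_{n+1}}$ coefficient-wise and substitutes $\bar{X}\mapsto S_n$ into the iterativity identity $\sum_{\mathbf{k}}\mathbb{D}_{\bar{X}_{n+1}}(D_{\mathbf{k}}(r))\bar{X}^{\mathbf{k}}=\sum_{\mathbf{k}}D_{\mathbf{k}}(r)F(\bar{X},\bar{X}_{n+1})^{\mathbf{k}}$, invoking associativity of $F$ to recognize $F(S_n,\bar{X}_{n+1})=S_{n+1}$.

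Taking $n=p$, the coefficient of the diagonal monomial $(\bar{X}_1\cdots\bar{X}_p)^{\mathbf{i}}$ on the left is exactly $D_{\mathbf{i}}^{(p)}(r)$, so $D_{\mathbf{i}}^{(p)}(r)=\sum_{\mathbf{k}}D_{\mathbf{k}}(r)\,c^{(\mathbf{k})}_{\mathbf{i}}$ with $c^{(\mathbf{k})}_{\mathbf{i}}:=[\bar{X}_1^{\mathbf{i}}\cdots\bar{X}_p^{\mathbf{i}}]S_p^{\mathbf{k}}$. Multiplying by $\bar{X}^{\mathbf{i}}$ and summing over $\mathbf{i}$ reduces the claim to proving $\sum_{\mathbf{i}}c^{(\mathbf{k})}_{\mathbf{i}}\bar{X}^{\mathbf{i}}=[p]_F(\bar{X}^{1/p})^{\mathbf{k}}$ for each $\mathbf{k}\in\mathbb{N}^e$. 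To see this, specialize $\bar{X}_j\mapsto\bar{X}^{1/p}$ in $S_p^{\mathbf{k}}$ to obtain
\[
[p]_F(\bar{X}^{1/p})^{\mathbf{k}}=\sum_{\mathbf{i}_1,\ldots,\mathbf{i}_p}\bigl[\textstyle\prod_j\bar{X}_j^{\mathbf{i}_j}\bigr]S_p^{\mathbf{k}}\cdot\bar{X}^{(\mathbf{i}_1+\cdots+\mathbf{i}_p)/p},
\]
and group this sum into orbits of the cyclic group $\mathbb{Z}/p$ that rotates the tuple $(\mathbf{i}_1,\ldots,\mathbf{i}_p)$. Commutativity of $F$ makes $S_p^{\mathbf{k}}$ symmetric in its arguments, so both the coefficient and the exponent $(\sum_j\mathbf{i}_j)/p$ are orbit-constant; since $p$ is prime, every non-constant tuple has orbit of size exactly $p$, so each such orbit contributes a multiple of $p$ and vanishes in characteristic $p$. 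Only the diagonal tuples $(\mathbf{i},\ldots,\mathbf{i})$ survive and produce $\sum_{\mathbf{i}}c^{(\mathbf{k})}_{\mathbf{i}}\bar{X}^{\mathbf{i}}$, as needed.

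The main obstacle is the cancellation of the non-diagonal terms: it is tempting to invoke the factorization $[p]_F(\bar{Y})=G(\bar{Y}^p)$ through the Frobenius (valid in characteristic $p$), but in fact only the cyclic $\mathbb{Z}/p$-symmetry of $S_p^{\mathbf{k}}$ coming from commutativity of $F$ is needed, after which the remainder is routine bookkeeping.
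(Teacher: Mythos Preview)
The paper does not prove this statement: it is quoted as Corollary~2.25 of \cite{Hoff1} and stated as a \texttt{fact} without argument, so there is no ``paper's own proof'' to compare against. That said, your argument is correct and entirely self-contained.

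A few remarks on the approach. The iterated identity
\[
\sum_{\mathbf{i}_1,\ldots,\mathbf{i}_n}(D_{\mathbf{i}_n}\circ\cdots\circ D_{\mathbf{i}_1})(r)\,\bar{X}_1^{\mathbf{i}_1}\cdots\bar{X}_n^{\mathbf{i}_n}
=\sum_{\mathbf{k}}D_{\mathbf{k}}(r)\,S_n(\bar{X}_1,\ldots,\bar{X}_n)^{\mathbf{k}}
\]
is exactly the $n$-fold unwinding of the iterativity diagram, and extracting the coefficient of the diagonal monomial is the clean way to isolate $D_{\mathbf{i}}^{(p)}$. Your orbit-counting step under the cyclic $\mathbb{Z}/p$-action is precisely the classical proof that the $p$-fold sum map of a commutative formal group in characteristic $p$ factors through Frobenius; indeed, applying it to the unit multi-indices $\mathbf{k}=(0,\ldots,1,\ldots,0)$ reproves $[p]_F(\bar{Y})\in k\llbracket\bar{Y}^p\rrbracket$ directly. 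So what you describe as ``avoiding'' the Frobenius factorisation is really giving an on-the-spot proof of it. The usual route (and presumably the one taken in \cite{Hoff1}) would instead invoke that factorisation $[p]_F(\bar{Y})=G(\bar{Y}^p)$ as a known input, set all $\bar{X}_j=\bar{Y}$ in the $p$-fold identity, and then substitute $\bar{Y}\mapsto\bar{X}^{1/p}$; this is shorter but less self-contained. Note finally that your argument genuinely uses commutativity of $F$ (to get cyclic symmetry of $S_p$); this is harmless here since the formal group laws of interest---in particular the Witt group---are commutative.
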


The reader may check basic notions about the Witt groups in \cite[p. 172.]{serre1988algebraic} and \cite{Witt}. The Witt group (over $\mathbb{F}_p$) of dimension $n$ is denoted by $W_n$. 
Let us recall a few well-known homomorphisms related to the group structure on $W_n$: 
\begin{itemize}
 \item the Frobenius $\fr:W_n\to W_n$, given by $(x_1,\ldots,x_n)\mapsto (x_1^p,\ldots,x_n^p)$,
 \item the Verschiebung $\ve:W_n\to W_{n+1}$, given by $(x_1,\ldots,x_n)\mapsto(0,x_1,\ldots,x_n)$,
 \item the restriction $\re:W_n\to W_{n-1}$, given by $(x_1,\ldots,x_n)\mapsto(x_1,\ldots,x_{n-1})$.
\end{itemize}
For us, the most important fact is that the product $\fr\circ\ve\circ\re$ is equal to the multiplication by $p$ in the algebraic group $W_n$. 
We assume, for the rest of this paper, that $H(\bar{X},\bar{Y})\in(\mathbb{F}_p[\bar{X},\bar{Y}])^e$ defines
the group law on $W_e$. Instead of $H$-iterativity, $H$-derivations, $H$-rings etc., we write $W_e$-iterativity, $W_e$-derivations, $W_e$-rings etc. 

\begin{example}\label{ex:1}
For $p=3$ and $e=2$, the group law in $W_e$ is given by:
$$(X_1,X_2)\ast(Y_1,Y_2)=(X_1+Y_1,X_2+Y_2+X_1^2Y_1+X_1Y_1^2).$$
\end{example}

\begin{definition}
By $W_e-\dcf$ we denote an $\mathcal{L}^{\ast}$-theory, which models are pairs
$(K,\mathbb{D})$ satisfying
\begin{itemize}
\item the field $K$ is separably closed $W_e$-field of characteristic $p$,
\item we have $[K:K^p]=p^e$ (the degree of imperfection is equal to $e$),
\item we have $\ker D_{(1,0,\ldots,0)}\cap\ldots\cap\ker D_{(0,\ldots,0,1)}=K^p$
($\mathbb{D}$ is a \emph{strict} $W_e$-derivation).
\end{itemize}
\end{definition}
Properties of this theory were already described in \cite{HK},
as a particular case of the more general theory $F-\dcf$, where $F$ is an arbitrary formal group law.
We summarise these properties in the fact below.
\begin{fact}\label{propertiesWe}
  Theory $W_e-\dcf$ (in the language $\mathcal{L}^{\ast}$)
is stable, complete, has elimination of imaginaries and quantifier elimination.
\end{fact}

\section{The original theory}\label{original}
In this section, we recall the theory introduced in \cite{MW}.
The main aim of Messmer and Wood was to obtain 
an axiomatization for separably closed fields in
the language of rings expanded by symbols for one higher derivation. 
It was pointed out in \cite{Zieg3} (on the request of Messmer and Wood) that there are some gaps in \cite{MW}. However, the problem with the theory proposed by Messmer and Wood is more conceptual and we will provide some details on that.


We are working in the language $\mathcal{L}_{MW}$.
For $n=\gamma_0+\gamma_1 p+\ldots+\gamma_s p^s$ ($p$-adic expansion), where $s\in\mathbb{N}$, $0\leqslant \gamma_0,\ldots,\gamma_s<p$, 
consider a new function symbol
\begin{equation}\label{D_n}
D_n=\frac{(p!)^{\gamma_1}\cdot\ldots\cdot(p^s!)^{\gamma_s}}{n!}D_1^{(\gamma_0)}\circ D_p^{(\gamma_1)}\circ\ldots\circ D_{p^s}^{(\gamma_s)},\qquad D_0=\id
\end{equation}
(note that the coefficient $\frac{(p!)^{\gamma_1}\cdot\ldots\cdot(p^s!)^{\gamma_s}}{n!}$ is not divisible by $p$).
\begin{definition}\label{shf_org}
Recall that the theory $\shf_{p,e}$ from \cite{MW} (in the language $\mathcal{L}_{MW}$) is given by:
 \begin{enumerate}
  \item[H0] the axioms for fields of characteristic $p$,
  \item[H1] for all $i\in\mathbb{N}$: $D_{p^i}(x+y)=D_{p^i}(x)+D_{p^i}(y)$,
  \item[H2] for all $i\in\mathbb{N}$: $D_{p^i}(x\cdot y)=\sum\limits_{k+l=p^i}D_k(x)\cdot D_l(y)$,
  \item[H3] for all $i,j\in\mathbb{N}$: $D_{p^i}(D_{p^j}(x))=D_{p^j}(D_{p^i}(x))$,
  \item[H4] for all $i\in\mathbb{N}$: $D_{p^i}^{p^e}(x)=0$,
  \item[H5] $(\exists x)\,D_1^{p^e-1}(x)\neq 0$,
  \item[H6] $D_1(x)=0\rightarrow (\exists y)\,x=y^p$ (strictness),
  \item[H7] the axioms for separably closed fields.
 \end{enumerate}
 \end{definition}
\begin{example}
Assume that $p=e=2$ and let $(K,(D_{2^i})_{i\in\mathbb{N}})$ be a model for H0-H5.
We have $D_3=D_1D_2$, moreover there exists $x\in K$ such that $D_1^{(3)}(x)\neq 0$ and $D_1^{(4)}(x)=0$.
Let $y=D_1^{(2)}(x)$, it is natural to expect that
$$D_3(xy)=D_3(x)y+D_2(x)D_1(y)+D_1(x)D_2(y)+xD_3(y),$$
in other words $D_3$ should satisfy the generalized Leibniz rule.
Unfortunately
$$D_3(xy)=D_1\big(D_2(xy)\big)=D_3(x)y+D_2(x)D_1(y)+D_1(x)D_2(y)+xD_3(y)+$$
$$D_1\big(D_1(x)\big)D_1(y)+D_1(x)D_1\big(D_1(y)\big),$$
and
$$D_1D_1(x)D_1(y)+D_1(x)D_1D_1(y)=D_1^{(2)}(x)\cdot D_1^{(3)}(x)\neq 0.$$
\end{example}
Therefore the sequence $\lbrace D_n\rbrace_{n\in\mathbb{N}}$ is not a \emph{higher derivation} (see \cite{Mats1}), so the theory $\shf_{p,e}$ does not describe
a theory of fields with derivations in a manner that the authors of \cite{MW} expected. Actually, we do not even know whether the theory $\shf_{p,e}$
is consistent.

\section{The theory $\shf_{p,e}$ revised}
\subsection{The axiomatization}
The definition of the operators from (\ref{D_n}) includes some information about the standard iterativity, which leads to problems described in
Section \ref{original}.
Instead of defining operators by formulas (\ref{D_n}), we add to the language new symbols for those operators, and so replace the language with $\mathcal{L}$. It is a minor, but important, change. 
We present a smooth modification of the original theory from \cite{MW} in our language.

\begin{definition}\label{shf_def}
 $\shf_{p,e}'$ denotes the $\mathcal{L}$-theory which contains the following axioms:
 \begin{enumerate}
  \item[H0$'$] the axioms for fields of characteristic $p$,
  \item[H1$'$] for all $n\in\mathbb{N}$: $D_n(x+y)=D_n(x)+D_n(y)$,
  \item[H2$'$] for all $n\in\mathbb{N}$: $D_n(x\cdot y)=\sum\limits_{k+l=n}D_k(x)\cdot D_l(y)$,
  \item[H3$'$] for all $i,j\in\mathbb{N}$: $D_i(D_j(x))=D_j(D_i(x))$,
  \item[H4$'$] for all $i\in\mathbb{N}$: $D_i^{p^e}(x)=0$,
  \item[H5$'$] $(\exists x)\,D_1^{p^e-1}(x)\neq 0$,
  \item[H6$'$] $D_1(x)=0\rightarrow (\exists y)\,x=y^p$ (strictness),
  \item[H7$'$] the axioms for separably closed fields.
 \end{enumerate}
\end{definition}

\noindent
\begin{remark}
\begin{enumerate}
\item  Axiom scheme H3$'$ is negligible (we do not use H3$'$ in the below proofs and it will be implied 
by the axiom H8$'$, which will be defined later),
but we keep it, because
in this form our axiomatization contains - in some sense - the original
axioms for $\shf_{p,e}$ given in Definition \ref{shf_org}.
 
\item  The axioms included in H0$'$, H4$'$-H7$'$ are the same as those included in H0, H4-H7 respectively. Axiom schemes H1 and H3 from the original theory are simply
special cases of axiom schemes H1$'$ and H3$'$ from our modification.
 
 \item  Axiom scheme H2 from the original theory, if expressed using only symbols from $\mathcal{L}_{MW}$,
contains some information about the standard iterativity
(again - which is actually the main source of problems).
The new one is independent from any iterativity rules. However, both the old H2 and the new H2$'$ axiom schemes
code the generalized Leibniz rule.
 
\item By Corollary \ref{consistency}, we are sure that the theory $\shf_{p,e}'$ is consistent, what is still unclear for the theory $\shf_{p,e}$.
\end{enumerate}
\end{remark}

The theory $\shf_{p,e}'$ is not complete, for example one could consider
$e=1$, $p=2$ and a sentence
$(\forall x) D_1D_2(x)=D_3(x)$, which is true in a model of $\mathbb{G}_a-\dcf$ (so also a model of 
$\shf_{2,1}'$), but which does not hold in the model of $\shf_{2,1}'$ described in the following example.

\begin{example}\label{ex:incompleteness}
Assume that $e=1$ and $p=2$.	
Let $\mathbb{D}:\mathbb{F}_p[t]\to \mathbb{F}_p[t]\llbracket X\rrbracket$ be the $\mathbb{F}_p$-algebra homomorphism determined by $t\mapsto t+X+X^2+X^3+\ldots$. Since $\mathbb{D}$ is a Hasse-Schmidt derivation on $\mathbb{F}_p[t]$, we can extend it to the field of fractions of $\mathbb{F}_p[t]$ and then to the separable closure (by Proposition 3.3. from \cite{HK}), say that $(F,\mathbb{D})$ is the described extension to $F:=(\mathbb{F}_p(t))^{\sep}$. In an obvious way, $(F,\mathbb{D})$ satisfies
H0$'$, H1$'$, H2$'$, H5$'$ and H7$'$. Because $(\mathbb{F}_p[t],\mathbb{D})$ is strict (i.e. it satisfies H6$'$) and has finite imperfectness degree ($e=1$), any \'{e}tale extension is strict,
thus $(F,\mathbb{D})$ is strict, i.e. it satisfies H6$'$.
To see that H3$'$ and H4$'$ hold, we need to use methods from Section 3.3 from \cite{HK1}.
Let us see how we can get H3$'$. Using the notation from \cite{HK1}, first, we show that
$\ev_{(X_2,X_1)}E_2E_1=E_2E_1$ is true over $\mathbb{F}_p[t]$, thus $(\mathbb{F}_p[t],\mathbb{D})$ satisfies H3$'$. Because of the uniqueness from Proposition 3.3 from \cite{HK} and \'{e}tality of $\mathbb{F}_p[t]\subseteq F$,
we see that two-dimensional Hasse-Schmidt derivations $E_2E_1$ and $\ev_{(X_2,X_1)}E_2E_1$, which
extend $E_2E_1|_{\mathbb{F}_p[t]}$, must coincide. Thus H3$'$ holds in $(F,\mathbb{D})$.
For H4$'$, note that $\ev_{(X,X)}E_2E_1(t)=t$.
By Lemma 3.7 from \cite{HK1}, it is $D^{(2)}_i|_{\mathbb{F}_p[t]}$ for any $i>0$.
Then, again, the uniqueness from Proposition 3.3 from \cite{HK}, this time used for 
$\ev_{(X,X)}E_2E_1$ (the $2$-th compositions), gives us that also $D^{(2)}_i(a)=0$ for any $a\in F$ and $i>0$.
Therefore $(F,\mathbb{D})\models \shf_{p,e}'$ and we can calculate that
$D_1D_2(t)=0$ and $D_3(t)=1$.
\end{example}

Besides of the above reasons for incompleteness, there is still some ``free space" for the choice of iterativity conditions.
In other words, rejecting the formulas from (\ref{D_n}) gives us freedom to choose some iterativity rule.
However, iterativity conditions coming from $\mathbb{G}_a$ will correspond to separably closed fields with the degree of imperfection equal to $1$ (i.e. $e=1$) and this is not our goal. 
Therefore the theory $\shf_{p,e}'$ should be extended by iterativity conditions coming from a different
formal group law
and it will be done by adding one more axiom scheme. 
Before describing this axiom scheme, we state a necessary fact.
Let $(K,\mathbb{D})$ be an $\mathcal{L}^{\ast}$-structure (e.g. a model of $W_e-\dcf$) and $\mathbb{D}=(D_{\mathbf{i}})_{\mathbf{i}\in\mathbb{N}^e}$. 
 We introduce
   $$\partial_{i,n}:=D_{(0,\ldots,0,\underbracket[0.5pt]{n}_{i\text{-th place}},0\ldots,0)},$$
 where $n\in\mathbb{N}$, $i\leqslant e$.
 Note that $(\partial_{i,n})_{n\in\mathbb{N}}$ is a Hasse-Schmidt derivation for every $i\leqslant e$.
Before stating the next lemma, we provide an example where one can see how the general methods from the proof of Lemma \ref{We_iter} work in a particular case.
 
\begin{example}\label{ex:2}
Let us calculate what is $D_{(i,0)}\circ D_{(0,j)}$ for $W_2$ with $p=3$ (see Example \ref{ex:1}).
We know that $H(X_1,0,0,Y_2)=(X_1,0)\ast(0,Y_2)=(X_1,Y_2)$. 
Let $(K,\mathbb{D})\models W_2-\dcf$, $\ch(K)=3$, and let $a\in K$ be arbitrary.
By the formula following from the diagram from Definition \ref{f_der}, i.e.
$$\sum\limits_{\mathbf{i},\mathbf{j}\in\mathbb{N}^e}D_{\mathbf{i}}D_{\mathbf{j}}(a)\bar{X}^{\mathbf{i}}\bar{Y}^{\mathbf{j}}=\sum\limits_{\mathbf{k}\in\mathbb{N}^e}D_{\mathbf{k}}(a)(H(\bar{X},\bar{Y}))^{\mathbf{k}},$$
where $\bar{X}=(X_1,X_2)$ and $\bar{Y}=(Y_1,Y_2)$, after putting $X_2=Y_1=0$, we get
$$\sum\limits_{i,j\in\mathbb{N}}D_{(i,0)}D_{(0,j)}(a)X_1^{i}Y_2^{j}=\sum\limits_{k,l\in\mathbb{N}}D_{(k,l)}(a)X_1^kY_2^l.$$
Hence $D_{(i,0)}\circ D_{(0,j)}=D_{(i,j)}$.
\end{example} 
 
\begin{lemma}\label{We_iter} 
Assume that $(K,\mathbb{D})\models W_e-\dcf$. We have the following.
 \begin{itemize}
  \item[i)] For every $\mathbf{i}\in\mathbb{N}^e\setminus\{\bar{0}\}$, it follows that $D_{\mathbf{i}}^{(p^e)}=0$.
  \item[ii)] We have
   $$\partial_{e,n}^{(p)}=0\;\;\text{ and }\;\; \partial_{i,n}^{(p)}=\partial_{i+1,n},$$
   where $n\in\mathbb{N}$ and $i<e$.
  \item[iii)] For any $i_1,\ldots,i_e\in\mathbb{N}$, it holds that 
  $$D_{(i_1,\ldots,i_e)}=\partial_{1,i_1}\circ\partial_{1,i_2}^{(p)}\circ\ldots\circ\partial_{1,i_e}^{(p^{e-1})}.$$
\end{itemize}
\end{lemma}
 
\begin{proof}
 Recall that $H\in (\mathbb{F}_p[\bar{X},\bar{Y}])^e$ defines the group law of $W_e$.
 We begin with proving the first item.
 It follows from Fact \ref{2.25} that, for every $a\in K$, we have
 $$\sum\limits_{\mathbf{i}\in\mathbb{N}^e}D_{\mathbf{i}}^{(p)}(a) \bar{X}^{\mathbf{i}}=\ev_{[p]_H(\bar{X}^{1/p})}
 \Big(\sum\limits_{\mathbf{i}\in\mathbb{N}^e}D_{\mathbf{i}}(a) \bar{X}^{\mathbf{i}}\Big).$$
Therefore $\mathbb{D}^{(p)}=(D_{\mathbf{i}}^{(p)})_{\mathbf{i}\in\mathbb{N}^e}$ is a Hasse-Schmidt derivation.
 But $\mathbb{D}^{(p)}$ is also $W_e$-iterative (relatively easy diagram chase as in \cite[Lemma 2.6]{HK1}) and we can use Fact \ref{2.25}
 for $\mathbb{D}^{(p)}$. Repeating this process we obtain
 $$\sum\limits_{\mathbf{i}\in\mathbb{N}^e}D_{\mathbf{i}}^{(p^e)}(a) \bar{X}^{\mathbf{i}}=\big(\ev_{[p]_H(\bar{X}^{1/p})}\big)^{(e)}
 \Big(\sum\limits_{\mathbf{i}\in\mathbb{N}^e}D_{\mathbf{i}}(a) \bar{X}^{\mathbf{i}}\Big).$$
 Because $(\fr\circ\ve\circ\re)^{(e)}(x_1,\ldots,x_e)=(0,\ldots,0)$, we see that
 $$\big(\ev_{[p]_H(\bar{X}^{1/p})}\big)^{(e)}=\ev_{\bar{0}},$$
 so $D_{\mathbf{i}}^{(p^e)}=0$ for any $\mathbf{i}\in\mathbb{N}^e\setminus\{\bar{0}\}$.
 \
 \\
 The second item follows from Fact \ref{2.25} for multiplication by $p$ in $W_e$ (coinciding with $\fr\circ\ve\circ\re$).
 The last item follows from the second item and the equality
 $$D_{(i_1,\ldots,i_n,0,\ldots,0)}\circ D_{(0,\ldots,0,i_{n+1},\ldots,i_e)}=D_{(i_1,\ldots,i_e)},$$
 where $n\leqslant e$. 
 To prove the above equality, as in Example \ref{ex:2}, we use the diagram from Definition \ref{f_der}, namely the following formula:
$$\sum\limits_{\mathbf{i},\mathbf{j}\in\mathbb{N}^e}D_{\mathbf{i}}D_{\mathbf{j}}(a)\bar{X}^{\mathbf{i}}\bar{Y}^{\mathbf{j}}=\sum\limits_{\mathbf{k}\in\mathbb{N}^e}D_{\mathbf{k}}(a)(H(\bar{X},\bar{Y}))^{\mathbf{k}},$$
where $a\in K$, and the following property of $W_e$:
\begin{equation}\label{witt_induction}
 (X_1,\ldots,X_n,0,\ldots,0)\ast(0,\ldots,0,X_{n+1},\ldots,X_e)=(X_1,\ldots,X_e),
\end{equation}
 where $n\leqslant e$, which can be obtained by induction involving the formulas (a) and (b),
 defining the group structure of the Witt group, from \cite[p. 128]{Witt}.
\end{proof}

We are ready now to define the promised axiom scheme H8$'$ - the axiom scheme, which corresponds to the iterativity conditions.
The main idea here is to ``compress" iterativity conditions of the $e$-dimensional formal group law $W_e$ into some iterativity conditions of an one-dimensional nature.
By Lemma \ref{We_iter}.iii), in any model $(K,\mathbb{D})$ of the theory $W_e-\dcf$ the iterativity conditions:
\begin{equation}\tag{$\mathbf{Y}_{\mathbf{i},\mathbf{j}}$}
 D_{\mathbf{i}}\circ D_{\mathbf{j}}=\sum\limits_{\mathbf{l}}\alpha_{\mathbf{i},\mathbf{j}}(\mathbf{l})\cdot D_{\mathbf{l}},
\end{equation}
where 
$\alpha_{\mathbf{i},\mathbf{j}}(\mathbf{l})\in\mathbb{F}_p$ are given by $W_e$, can be expressed using only
operators of the form $D_{(n,0\ldots,0)}$, $n\in\mathbb{N}$, i.e.:
\begin{equation}\tag{$\mathbf{Y}'_{\mathbf{i},\mathbf{j}}$}
 \partial_{1,i_1}\circ\ldots\circ\partial_{1,i_e}^{(p^{e-1})}\circ
 \partial_{1,j_1}\circ\ldots\circ\partial_{1,j_e}^{(p^{e-1})}=
 \sum\limits_{\mathbf{l}}\alpha_{\mathbf{i},\mathbf{j}}(\mathbf{l})\cdot
 \partial_{1,l_1}\circ\ldots\circ\partial_{1,l_e}^{(p^{e-1})}, 
\end{equation}
where $(i_1,\ldots,i_e)=\mathbf{i}$, $(j_1,\ldots,j_e)=\mathbf{j}$ and $(l_1,\ldots,l_e)=\mathbf{l}$.
To obtain axioms in the language $\mathcal{L}$, which code the $W_e$-iterativity conditions, we just make the following translation:
\begin{equation}\tag{$\mathbf{Y}^{\ast}_{\mathbf{i},\mathbf{j}}$}
 D_{i_1}\circ\ldots\circ D_{i_e}^{(p^{e-1})}\circ
 D_{j_1}\circ\ldots\circ D_{j_e}^{(p^{e-1})}=
 \sum\limits_{\mathbf{l}}\alpha_{\mathbf{i},\mathbf{j}}(\mathbf{l})\cdot
 D_{l_1}\circ\ldots\circ D_{l_e}^{(p^{e-1})}.
\end{equation}
Finally, we can define the axiom scheme we need:
\begin{itemize}
 \item[H8$'$] for every $\mathbf{i},\mathbf{j}\in\mathbb{N}^e$: $(\mathbf{Y}^{\ast}_{\mathbf{i},\mathbf{j}})$.
\end{itemize}
Note that the above axiom scheme plays a similar role to the formulas for $D_n$ in the original theory $\shf_{p,e}$ (\ref{D_n}), but carries more data.

\begin{definition}
 By $\overline{\shf}_{p,e}$ we denote the theory given by the axioms contained in H0$'$-H8$'$, in the language $\mathcal{L}$.
\end{definition}

\subsection{Extension by definitions}
To prove the expected properties of the theory $\overline{\shf}_{p,e}$, we will use the notion of \emph{extension by definitions} as described in \cite[Chapter 4.6]{SHOE}
to add new symbols for functions. For convenience of the reader we include basics about this procedure. Assume that $T$ is a theory
in a language $L$ and $\psi(\mathbf{x},y)$ is a formula in $L$, $\mathbf{x}=(x_1,\ldots,x_n)$. Suppose that
\begin{itemize}
 \item[i)] $T\vdash (\exists y)\psi(\mathbf{x},y)\;\;$ (existence condition),
 \item[ii)] $T\vdash (\,\psi(\mathbf{x},y)\wedge \psi(\mathbf{x},y')\,\rightarrow\, y=y'\,)\;\;$ (uniqueness condition).
\end{itemize}
We define $L^{\ast}$ as $L$ with added an extra new $n$-ary function symbol $f$ and form $T^{\ast}$, a theory in the language $L^{\ast}$,
by adding to $T$ a new axiom
$$f(\mathbf{x})=y \leftrightarrow \psi(\mathbf{x},y).$$
This axiom will be called \emph{defining axiom} for $f$.
There is no problem to add more function symbols and defining axioms,
and a theory obtained in this way will be called \emph{extension by definitions} of $T$. Assume that $T^{\ast}$ is an extension by definitions of the theory $T$.

Let $\xi$ be a formula in $L^{\ast}$. There is a method for translating the formula $\xi$
to a formula $\xi^r$ in $L$, by replacing each occurrence of $f(\mathbf{x})=y$ with $\psi(\mathbf{x},y)$ - for details we refer to \cite[p. 59]{SHOE}. The following is folklore, thus we provide it without a proof.

\begin{fact}\label{properties01}
 \begin{itemize}
  \item[i)] If the theory $T^{\ast}$ is stable, then the theory $T$ is stable as well.
  \item[ii)] If the theory $T^{\ast}$ is complete, then the theory $T$ is complete as well.
  \item[iii)] If the theory $T^{\ast}$ has elimination of imaginaries, then the theory $T$ has elimination of imaginaries as well.
 \end{itemize}
\end{fact}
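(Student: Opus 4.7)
The strategy is to systematically translate statements about $T^{\ast}$ into statements about $T$ using Remark \ref{translation2}, in particular the fact that every $L^{\ast}$-formula $\xi$ satisfies $T^{\ast} \vdash \xi \leftrightarrow \xi^r$ with $\xi^r$ an $L$-formula. This means $T$ and $T^{\ast}$ define exactly the same subsets of any model of $T$, once it is canonically expanded to an $L^{\ast}$-structure via the defining axiom of $f$.

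For (ii), completeness of $T$ is immediate: any $L$-sentence $\xi$ is decided by $T^{\ast}$, and by the conservativity part of Remark \ref{translation2} this decision takes place already in $T$. For (i), I would show that for every parameter set $A$ the restriction map $S^{L^{\ast}}(A) \to S^L(A)$ is a bijection. Surjectivity follows by expanding a model realising an $L$-type to an $L^{\ast}$-structure via the defining axiom of $f$. For injectivity, suppose two $L^{\ast}$-types $p_1, p_2$ over $A$ agree on all $L$-formulas. For any $L^{\ast}$-formula $\xi$ with parameters in $A$, Remark \ref{translation2}(i) yields $\xi \in p_i \iff \xi^r \in p_i$, and since $\xi^r$ is an $L$-formula, membership of $\xi$ in $p_1$ and in $p_2$ coincides. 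Hence $|S^L(A)| = |S^{L^{\ast}}(A)|$, and $\lambda$-stability transfers from $T^{\ast}$ to $T$.

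For (iii), the subtle point is to ensure that the code of an imaginary class is given by an $L$-formula rather than an $L^{\ast}$-formula, and this will be the main (small) obstacle to navigate. Given a $0$-definable-in-$L$ equivalence relation $E$ and a class $C$ in a model of $T$, elimination of imaginaries in $T^{\ast}$ produces an $L^{\ast}$-formula $\psi(\bar x, \bar y)$ and a tuple $\bar b$ such that $\psi(\bar x, \bar b)$ defines $C$ and $\bar b$ is the unique such tuple. Replacing $\psi$ by its translation $\psi^r \in L$, the formula $\psi^r(\bar x, \bar b)$ still defines $C$ by Remark \ref{translation2}(i); moreover any $\bar b'$ with $\psi^r(\bar x, \bar b')$ defining $C$ forces $\psi(\bar x, \bar b')$ to define $C$ too, so $\bar b' = \bar b$ by uniqueness of the $L^{\ast}$-code in $T^{\ast}$. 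Thus $C$ is coded by an $L$-formula and $T$ has EI. The only subtlety to watch is that the recipe for $\psi^r$ may enlarge the tuple $\bar y$ via the existential rewriting in (\ref{translation1}); this is a bookkeeping matter and does not affect uniqueness of the code.
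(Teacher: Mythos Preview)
Your proof is correct and, for parts (ii) and (iii), follows essentially the same line as the paper: conservativity gives completeness, and replacing the $L^{\ast}$-code formula $\psi$ by its translation $\psi^r$ gives the code for the imaginary in $L$. One small point: your worry that the translation $\psi \mapsto \psi^r$ might enlarge the parameter tuple $\bar y$ is unfounded --- the recipe in (\ref{translation1}) only introduces \emph{bound} variables $z$, so the free variables of $\psi^r$ are exactly those of $\psi$ and the code tuple is unchanged. There is no bookkeeping to do there.

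For part (i) you take a genuinely different route. The paper argues by contrapositive in one line: an unstable $L$-formula witnesses the order property, and since it is also an $L^{\ast}$-formula it makes $T^{\ast}$ unstable as well. Your approach instead establishes a bijection $S^{L^{\ast}}(A) \to S^{L}(A)$ of type spaces via restriction, using Remark~\ref{translation2}(i) for injectivity, and then transfers $\lambda$-stability by counting. Both arguments are valid; the paper's is shorter and avoids any discussion of types, while yours gives a bit more --- the identification of type spaces is a stronger statement that would also transfer other type-counting properties (superstability, NIP via coheirs, etc.) without further work.
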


Recall that $\mathcal{L}^{\ast}=\mathcal{L}^0\cup\lbrace D_{\mathbf{i}}\rbrace_{\mathbf{i}\in\mathbb{N}^e}$.
After identifying $D_n=D_{(n,0,\ldots,0)}$, for each $n\in\mathbb{N}$, we can treat $\mathcal{L}$ as a sublanguage of $\mathcal{L}^{\ast}$.
By $(\overline{\shf}_{p,e})^{\ast}$ we denote
the theory $\overline{\shf}_{p,e}$ in the language $\mathcal{L}^{\ast}$
with added defining axioms for every $\mathbf{i}=(i_1,\ldots,i_e)\in\mathbb{N}^e$:
\begin{equation}\label{axiom1} 
D_{(i_1,\ldots,i_e)}(x)=y\;\leftrightarrow\; D_{i_1}\circ D_{i_2}^{(p)}\circ\ldots\circ D_{i_e}^{(p^{e-1})}(x)=y.
\end{equation}
It is obvious that the $\mathcal{L}^{\ast}$-theory $(\overline{\shf}_{p,e})^{\ast}$ is an extension by definitions of the $\mathcal{L}$-theory $\overline{\shf}_{p,e}$.
Recall that $\mathcal{L}^{\ast}$ is also a language for the theory $W_e-\dcf$.

\begin{lemma}\label{shf_We}
Any model of $(\overline{\shf}_{p,e})^{\ast}$ is a model of $W_e-\dcf$.
\end{lemma}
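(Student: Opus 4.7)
Let $(K,(D_\mathbf{i})_{\mathbf{i}\in\mathbb{N}^e})$ be a model of $(\overline{\shf}_{p,e})^{\ast}$. By $(\ref{axiom1})$, setting $D_n:=D_{(n,0,\ldots,0)}$, each $D_\mathbf{i}$ is the composition $D_{i_1}\circ D_{i_2}^{(p)}\circ\cdots\circ D_{i_e}^{(p^{e-1})}$. I need to check that the data satisfy the axioms of $W_e-\dcf$: that $K$ is a separably closed field of characteristic $p$ of imperfection degree $e$, and that $\mathbb{D}:=(D_\mathbf{i})_\mathbf{i}$ is a strict $W_e$-derivation.

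Several of these ingredients are short. That $K$ is separably closed of characteristic $p$ is H0$'$ together with H7$'$. The $W_e$-iterativity of $\mathbb{D}$ in operator form, $D_\mathbf{i}\circ D_\mathbf{j}=\sum_\mathbf{l}\alpha_{\mathbf{i},\mathbf{j}}(\mathbf{l})D_\mathbf{l}$, is exactly what H8$'$ asserts once $(\ref{axiom1})$ is applied on both sides. Strictness comes from H6$'$: it gives $\ker D_1=K^p$, and since $D_{(1,0,\ldots,0)}=D_1$ while $K^p$ lies in the kernel of every $D_n$ (hence of every composition of $D_n$'s), the intersection $\bigcap_{j=1}^e\ker D_{(0,\ldots,1,\ldots,0)}$ is squeezed between $K^p$ and $\ker D_1=K^p$. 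The degree of imperfection being $e$ will follow from H4$'$ and H5$'$ together with the $W_e$-structure of $\mathbb{D}$ once we have it (using Lemma \ref{We_iter}.i)).

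The substantive step is showing that $\mathbb{D}:K\to K\llbracket\bar X\rrbracket$, $a\mapsto\sum_\mathbf{i} D_\mathbf{i}(a)\bar X^\mathbf{i}$, is a $k$-algebra homomorphism. Additivity is immediate (each $D_\mathbf{i}$ is a composition of operators additive by H1$'$), so everything reduces to the multi-variable Leibniz rule for the $D_\mathbf{i}$. My strategy is to factor $\mathbb{D}$ through 1-variable pieces: define $\Phi_j(a):=\sum_n D_n^{(p^{j-1})}(a)X_j^n$ and extend each $\Phi_j$ coefficient-wise to power series in the remaining variables; then $(\ref{axiom1})$ rearranges to $\mathbb{D}=\Phi_1\circ\Phi_2\circ\cdots\circ\Phi_e$. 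It is then enough to check that every $\Phi_j$ is a ring homomorphism --- equivalently, that every family $(D_n^{(p^{j-1})})_n$ is a 1-variable Hasse-Schmidt derivation. The case $j=1$ is H1$'$ and H2$'$; for $j>1$ I plan to induct, using H8$'$ restricted to indices supported on the first coordinate together with the Witt-group identity $\fr\circ\ve\circ\re=[p]_{W_e}$ (as in the proof of Lemma \ref{We_iter}.ii) via Fact \ref{2.25}) to promote the Leibniz rule from $(D_n^{(p^{j-1})})_n$ to $(D_n^{(p^j)})_n$. This inductive step --- converting the algebraic iterativity H8$'$ into the Leibniz identity for the Frobenius-twisted families $(D_n^{(p^j)})_n$ --- is the main technical obstacle.
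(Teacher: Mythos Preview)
Your overall skeleton matches the paper's proof: H0$'$+H7$'$ give the separably closed field, H8$'$ via the defining axioms (\ref{axiom1}) gives $W_e$-iterativity, H6$'$ gives strictness, and H4$'$+H5$'$ together with the $W_e$-structure give $[K:K^p]=p^e$. The paper dismisses the Leibniz rule for $D_{\mathbf{i}}$ as ``just a calculation''; your factorization $\mathbb{D}=\Phi_1\circ\cdots\circ\Phi_e$ is exactly the right way to organize that calculation, and reducing it to ``each $(D_n^{(p^{j-1})})_n$ is a one-variable Hasse--Schmidt derivation'' is correct.

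The gap is in how you propose to carry out the inductive step. Fact~\ref{2.25} is stated for $F$-rings, i.e.\ it presupposes that $\mathbb{D}$ is already a $k$-algebra homomorphism satisfying $F$-iterativity; invoking it here to \emph{establish} that $\mathbb{D}$ is a ring homomorphism is circular. Likewise, ``H8$'$ restricted to indices supported on the first coordinate'' does not isolate a one-dimensional formal group law: the Witt iterativity constants $\alpha_{(m,0,\ldots,0),(n,0,\ldots,0)}(\mathbf{l})$ are generally nonzero for $\mathbf{l}$ with higher coordinates, so you do not get a closed one-variable system to which an analogue of Fact~\ref{2.25} would apply.

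The fix is elementary and does not need Fact~\ref{2.25} at all. From H8$'$ (or directly H3$'$) the operators $D_n$ pairwise commute. Now if $(E_n)_n$ is any one-variable Hasse--Schmidt derivation on a ring of characteristic $p$ with commuting components, then $(E_n^{(p)})_n$ is again Hasse--Schmidt: expanding $E_n^{(p)}(xy)$ via repeated use of the Leibniz rule gives a sum over tuples $(a_1,\ldots,a_p)$ with $a_k+b_k=n$, and by commutativity the terms depend only on the multiset $\{a_1,\ldots,a_p\}$; every non-constant multiset occurs with multiplicity $\binom{p}{m_0,m_1,\ldots}\equiv 0\pmod p$, leaving exactly $\sum_{k+l=n}E_k^{(p)}(x)E_l^{(p)}(y)$. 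Applying this with $E_n=D_n^{(p^{j-2})}$ completes the induction.

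Two small corrections. First, it is not true that $K^p$ lies in $\ker D_n$ for every $n$ (e.g.\ $D_p(y^p)=D_1(y)^p$); what you need, and what holds, is $K^p\subseteq\ker D_1$, hence $K^p\subseteq\ker D_1^{(p^{j-1})}=\ker D_{(0,\ldots,1,\ldots,0)}$ for each $j$. Second, Lemma~\ref{We_iter}(i) is about nilpotence and does not by itself yield the imperfection degree; the paper obtains $[K:C_K]\leqslant p^e$ from \cite[Corollary~3.21]{HK} (using the $W_e$-structure) and $[K:C_K]\geqslant p^e$ from \cite[Corollary~2.2]{MW} via H4$'$ and H5$'$.
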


\begin{proof}
 Let $(K,\mathbb{D})\models (\overline{\shf}_{p,e})^{\ast}$, $\mathbb{D}=(D_{\mathbf{i}})_{\mathbf{i}\in\mathbb{N}^e}$.
 Being separably closed field means the same in both contexts, so H0$'$ and H7$'$ assure us that $K$ is a separably closed field of characteristic $p$.
 Now we show that $\mathbb{D}$ is a strict $W_e$-derivation, i.e.:
 \begin{enumerate}
  \item[(a)] $D_{(0,\ldots,0)}(x)=x$,
  \item[(b)] $D_{\mathbf{i}}(x+y)=D_{\mathbf{i}}(x)+D_{\mathbf{i}}(y)$, for all $\mathbf{i}\in\mathbb{N}^e$,
  \item[(c)] $D_{\mathbf{i}}(xy)=\sum\limits_{\mathbf{k}+\mathbf{l}=\mathbf{i}}D_{\mathbf{k}}(x)D_{\mathbf{l}}(y)$, for all $\mathbf{i}\in\mathbb{N}^e$,
  \item[(d)] $D_{\mathbf{i}}D_{\mathbf{j}}(x)=\sum\limits_{\mathbf{l}\in\mathbb{N}^e}\alpha_{\mathbf{i},\mathbf{j}}(\mathbf{l})D_{\mathbf{l}}(x)$,
  for all $\mathbf{i},\mathbf{j}\in\mathbb{N}^e$,
  \item[(e)] $D_{(1,0,\ldots,0)}(x)=\ldots=D_{(0,\ldots,0,1)}(x)=0\rightarrow(\exists y)(x=y^p)$.
 \end{enumerate}
By the defining axioms, proof of items (a)-(b) is straightforward.
For the item (c) we note that, as $(D_i)_{i\in\mathbb{N}}$ is a commutative Hasse-Schmidt derivation,
multiple use of Corollary 3.8 from \cite{HK1} gives us that each $(D_i^{(p^k)})_{i\in\mathbb{N}}$, where $k<e$, is a Hasse-Schmidt derivation. Thus $(D_{\mathbf{i}})_{\mathbf{i}\in\mathbb{N}^e}$, as a composition of $e$-many (1-dimensional) Hasse-Schmidt derivations is a Hasse-Schmidt derivation (cf. Remark 3.2(4) in \cite{HK}), in particular item (c), and also items (a) and (b) hold.
The item (d) is exactly coded by axioms from H8$'$.
Axiom H6$'$ contains the item (e).
\
\\
Now we are going to prove $[K:K^p]=p^e$. By the strictness, it is enough to show that $[K:C_K]=p^e$ for $C_K:=\ker D_{(1,0,\ldots,0)}\cap\ldots\cap\ker D_{(0,\ldots,0,1)}$.
By \cite[Corollary 3.21]{HK}, $[K:C_K]\leqslant p^e$, and by \cite[Corollary 2.2]{MW} (which remains valid), H4$'$ and H5$'$ imply $[K:C_K]\geqslant p^e$.
\end{proof}

\begin{lemma}\label{We_shf}
 Any model of $W_e-\dcf$ is a model of $(\overline{\shf}_{p,e})^{\ast}$.
\end{lemma}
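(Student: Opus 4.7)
The plan is to fix an arbitrary $(K, \mathbb{D}) \models W_e-\dcf$ in $\mathcal{L}^{\ast} = \mathcal{L}_{W_e}$ and verify each of the axioms H0$'$--H8$'$ of $(\overline{\shf}_{p,e})^{\ast}$. Under the defining axioms (\ref{axiom1}), the symbol $D_n$ (for $n \in \mathbb{N}$) is interpreted as $D_{(n,0,\ldots,0)}$, which is precisely the operator $\partial_{1,n}$ introduced before Lemma \ref{We_iter}. Hence the task reduces to assertions about the one-variable Hasse-Schmidt derivation $(\partial_{1,n})_{n\in\mathbb{N}}$ carved out of $\mathbb{D}$.

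Almost all axioms come out directly from the structural results already established. Axioms H0$'$ and H7$'$ hold by the very definition of $W_e-\dcf$ (separably closed field of characteristic $p$). Axioms H1$'$ and H2$'$ hold because $(\partial_{1,n})_n$, obtained from $\mathbb{D}$ by setting $X_2 = \cdots = X_e = 0$, is itself a Hasse-Schmidt derivation, hence additive and Leibniz. Axiom H3$'$ uses commutativity of the Witt group law $H(\bar{X},\bar{Y}) = H(\bar{Y},\bar{X})$: the iterativity constants satisfy $\alpha_{\mathbf{i},\mathbf{j}}(\mathbf{l}) = \alpha_{\mathbf{j},\mathbf{i}}(\mathbf{l})$, so $D_{\mathbf{i}} \circ D_{\mathbf{j}} = D_{\mathbf{j}} \circ D_{\mathbf{i}}$, and specializing to $\mathbf{i}=(i,0,\ldots,0)$, $\mathbf{j}=(j,0,\ldots,0)$ gives H3$'$. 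Axiom H4$'$ is Lemma \ref{We_iter}(i) specialized to $\mathbf{i}=(n,0,\ldots,0)$. Axiom H6$'$ combines strictness of $\mathbb{D}$ with Lemma \ref{We_iter}(ii): if $\partial_{1,1}(x) = 0$, then $\partial_{i,1}(x) = \partial_{1,1}^{(p^{i-1})}(x) = 0$ for every $1 \le i \le e$, so $x \in K^p$. Axiom H8$'$ is exactly the substitution of Lemma \ref{We_iter}(iii) into the $W_e$-iterativity condition $(\mathbf{Y}_{\mathbf{i},\mathbf{j}})$ satisfied by $\mathbb{D}$, which turns it into $(\mathbf{Y}^{\ast}_{\mathbf{i},\mathbf{j}})$ verbatim.

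The main obstacle is H5$'$: exhibiting $x \in K$ with $\partial_{1,1}^{(p^e-1)}(x) \neq 0$. The strategy is to expand the composition $\partial_{1,1}^{(p^e-1)}$ using repeated applications of $W_e$-iterativity, obtaining it as an $\mathbb{F}_p$-linear combination of the operators $D_{\mathbf{l}}$. One then shows this combination does not vanish identically on $K$, leveraging the facts that $[K:K^p] = p^e$ is built into $W_e-\dcf$ (via strictness together with the non-triviality coming from the $p$-basis) and that $\partial_{1,1}^{(p^e)} = 0$. This is essentially the converse of \cite[Corollary 2.2]{MW}: a dimension count on $K/C_K$ forces the nilpotency depth of $\partial_{1,1}$ to reach exactly $p^e$, so some element must survive $p^e - 1$ applications. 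All the other axioms reduce mechanically to Lemma \ref{We_iter} and the definition of $W_e-\dcf$; it is only this non-triviality statement that requires a genuine argument beyond bookkeeping.
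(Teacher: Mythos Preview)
Your treatment of H0$'$--H4$'$, H6$'$, H7$'$, H8$'$ and the defining axioms~(\ref{axiom1}) is essentially identical to the paper's (the paper does not even bother with H3$'$, noting it is subsumed by H8$'$). The only substantive difference is H5$'$.

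For H5$'$ the paper argues constructively. It introduces $\partial_i:=\partial_{1,1}^{(p^{i-1})}$ and the tower $F_i:=\bigcap_{j\ge i}\ker\partial_j$, shows each step satisfies $[F_{i+1}:F_i]\le p$ (via \cite[Corollary~3.21]{HK}), and uses $[K:K^p]=p^e$ to force $[F_{i+1}:F_i]=p$ throughout. It then picks $x_i\in F_{i+1}\setminus F_i$ with $\partial_i(x_i)=1$ (Matsumura, Theorem~27.3), sets $x=x_1^{p-1}\cdots x_e^{p-1}$, rewrites $\partial_{1,1}^{(p^e-1)}=\partial_1^{(p-1)}\circ\cdots\circ\partial_e^{(p-1)}$ via the base-$p$ expansion of $p^e-1$, and computes $\partial_{1,1}^{(p^e-1)}(x)=((p-1)!)^e\neq 0$.

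Your route is non-constructive: infer that $[K:C_K]=p^e$ forces the nilpotency index of $\partial_{1,1}$ to be exactly $p^e$. That implication is \emph{true}, but it is not literally ``the converse of \cite[Corollary~2.2]{MW}'' (that result only gives one direction), and you do not supply the argument. The missing step is a one-line piece of linear algebra: $\partial_{1,1}$ is a $C_K$-linear nilpotent endomorphism of the $p^e$-dimensional $C_K$-vector space $K$ whose kernel $C_K$ is $1$-dimensional over itself; a nilpotent operator with $1$-dimensional kernel on an $n$-dimensional space has a single Jordan block, hence nilpotency index exactly $n=p^e$, so $\partial_{1,1}^{(p^e-1)}\neq 0$. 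With that sentence added your proof of H5$'$ is complete and arguably cleaner than the paper's explicit witness; without it the key implication is only asserted. (The preliminary idea you mention---expanding $\partial_{1,1}^{(p^e-1)}$ through $W_e$-iterativity into a combination of $D_{\mathbf l}$'s---is unnecessary for this linear-algebra line and is closer in spirit to what the paper actually does.)
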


\begin{proof}
  Let $(K,\mathbb{D})\models W_e$-$\dcf$. From Lemma \ref{We_iter}.iii), it follows that the defining axioms (\ref{axiom1}) are satisfied for each $\mathbf{i}\in\mathbb{N}^e$.
  Axioms from H8$'$ are modelled on the $W_e$-iterativity, so they are also satisfied. 
  For the occurrence of the axiom scheme H4$'$, we expect that $D_{(n,0,\ldots,0)}^{(p^e)}=0$ for every $n\in\mathbb{N}$, and Lemma \ref{We_iter}.i) implies this.
  Axiom scheme H2$'$ is fulfilled in a natural way.
  Only the axioms H5$'$ and H6$'$ need some argumentation.
 \
 \\
 \textbf{(H5$'$)}
 For each $i\leqslant e$ let 
 $$\partial_i:=D_{(0,\ldots,0,\underbracket[0.5pt]{1}_{i\text{-th place}},0\ldots,0)}.$$
By Lemma \ref{We_iter}.ii), we see that 
 $$\partial_i^{(p)}=\partial_{i+1},$$
 for $i<e$, and $\partial_e^{(p)}=0$. Let $F_{e+1}:=K$ and for every $i\leqslant e$ let 
 $$F_i:=\bigcap\limits_{j\geqslant i}\ker \partial_j.$$
 In this notation, $\partial_i^*:=\partial_i|_{F_{i+1}}$ is a derivation over $F_i$ satisfying $\partial_i^{\ast(p)}=0$ (the last thing implies
 the additive iterativity for $\partial_i^*$).
 By \cite[Corollary 3.21]{HK} for $\partial_i^*$, we have $[F_{i+1}:F_i]\leqslant p$. The axioms of the theory $W_e$-$\dcf$ assure us that $[F_{e+1}:F_1]=p^e$, 
 and so $[F_{i+1}:F_i]=p$ for each $i\leqslant e$. By \cite[Theorem 27.3]{mat}, for every $i\leqslant e$ there exists an element $x_i \in F_{i+1}\backslash F_i$
 such that $\partial_i(x_i)=1$.
 \
 \\
 We put $x:=x_1^{p-1}\cdot\ldots\cdot x_e^{p-1}$ and then
  \begin{IEEEeqnarray*}{rCl}
  D_1^{(p^e-1)}(x) &=& D_1^{((p-1)+p(p-1)+\ldots+p^{e-2}(p-1)+p^{e-1}(p-1))}(x) \\
                 &=& \big(\partial_1\big)^{(p-1)}\circ\big(\partial_1^{(p)}\big)^{(p-1)}\circ\ldots\circ\big(\partial_1^{(p^{e-1})}\big)^{(p-1)}(x) \\
                 &=& \big(\partial_1\big)^{(p-1)}\circ\big(\partial_2\big)^{(p-1)}\circ\ldots\circ\big(\partial_e\big)^{(p-1)}(x) \\
                 &=& \partial_1^{(p-1)}\Big(x_1^{p-1}\cdot\partial_{2}^{(p-1)}\big(\ldots x_{e-2}^{p-1}\cdot\partial_{e-1}^{(p-1)}\big(x_{e-1}^{p-1}
                 \cdot\partial_e^{(p-1)}(x_e^{p-1})\big)\ldots\big)\Big)\\
                 &=& \big((p-1)!\big)^e\neq 0
\end{IEEEeqnarray*}

 \
 \\
 \textbf{(H6$'$)}
 Axioms of the theory $W_e$-$\dcf$ imply that $F_1=K^p$, hence it is enough to show that $F_1=\ker\partial_1$. Obviously $F_1\subseteq\ker\partial_1$, and
 if $\partial_1(c)=0$, then also $\partial_i(c)=\partial_1^{(p^{i-1})}(c)=0$ for any $i\leqslant e$. Therefore $F_1\supseteq\ker\partial_1$.
\end{proof}

\begin{cor}\label{consistency}
\begin{enumerate}
\item 
The theory $(\overline{\shf}_{p,e})^{\ast}$ is equal (as the set of logical consequences) to the theory $W_e-\dcf$.
The theories $(\overline{\shf}_{p,e})^{\ast}$, $\overline{\shf}_{p,e}$ and $\shf_{p,e}'$ are consistent.

\item
The theory $(\overline{\shf}_{p,e})^{\ast}$ (in the language $\mathcal{L}^{\ast}$) is stable, complete, has elimination of imaginaries and quantifier elimination.

\item 
 Theory $\overline{\shf}_{p,e}$ (in the language $\mathcal{L}$):
 \begin{itemize}
  \item[i)] is stable,
  \item[ii)] is complete,
  \item[iii)] has elimination of imaginaries,
  \item[iv)] has quantifier elimination.
 \end{itemize}
\end{enumerate}
\end{cor}

Consider the theory of separably closed fields of characteristic $p$
and degree of imperfection $e$ in the language $\mathcal{L}^0$, denoted by $\scf_{p,e}$.
So far, we dealt with arrows in the following diagram:
\begin{equation*}
 \xymatrixcolsep{3.5pc}\xymatrixrowsep{3.5pc}\xymatrix{& & W_e-\dcf \ar[d]^{\text{reduct}} \ar@/^/[dl]^{\text{reduct}}
\\ \shf_{p,e}' \ar@{--}[r]_(.4){+\text{H8}'} & \overline{\shf}_{p,e} \ar[r]_{\text{reduct}} \ar@/^1.5pc/[ur]^{\text{ext. by def.}}& \scf_{p,e}}
\end{equation*}
(the reduction from $W_e-\dcf$ to $\scf_{p,e}$ is trivial, the reduction from $\overline{\shf}_{p,e}$ to $\scf_{p,e}$ needs \cite[Corollary 2.2]{MW}, which remains valid).
However there is also a path leading from $\scf_{p,e}$ to $W_e-\dcf$. 
Actually this idea is already exposed in Section 4.3 of \cite{HK},
therefore instead of repeating a huge part of \cite{HK}, we suggest consulting \cite{HK}.

\bibliographystyle{plain}
\bibliography{moja}

\end{document}